\documentclass[11pt]{article}
\usepackage[a4paper,margin=25mm]{geometry}
\usepackage{amsmath,amssymb,amsthm,mathtools}
\usepackage[pdfusetitle,colorlinks=true,urlcolor=blue,citecolor=blue,linkcolor=blue]{hyperref}
\newtheorem{theorem}{Theorem}
\newtheorem{lemma}{Lemma}

\newcommand{\Q}{\mathbb Q}
\newcommand{\C}{\mathbb C}
\newcommand{\Z}{\mathbb Z}

\title{A simpler proof that there are no Landau--Siegel zeros}
\author{Bryce Kerr}

\date{}
\begin{document}
\maketitle

\begin{abstract}

This paper simplifies the recent OpenAI proof that there are no Landau--Siegel zeros.
\end{abstract}
\section{Introduction}

OpenAI~\cite{OpenAI} recently established that Siegel zeros do not exist. Our aim is to give a conceptually simpler proof of this result.

\begin{theorem}\label{thm:nosiegel}
There is an absolute constant $c>0$ such that, for every primitive nonprincipal real Dirichlet character $\chi(n)=(q/n)$, where $q$ is a fundamental discriminant, every real zero $\beta\in(0,1)$ of $L(s,\chi)$,
\[
 (1-\beta)\log|q|\ge c.
\]
\end{theorem}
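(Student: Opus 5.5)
The plan is to argue by contradiction. Suppose $\beta$ is a real zero of $L(s,\chi)$ with $\eta:=(1-\beta)\log|q|$ small, and play positivity of the coefficients of $\zeta(s)L(s,\chi)$ against the vanishing at $s=\beta$. Set $\lambda=1*\chi$, so that $\lambda(n)=\sum_{d\mid n}\chi(d)\ge 0$, $\lambda(1)=1$ and $\lambda(m^2)\ge 1$. Fix a smooth nonnegative weight $w$ supported in $[1/2,2]$ (or a Mellin-friendly weight such as $e^{-t}$) and a length $x=|q|^{A}$ with $A$ to be chosen. The basic object is
\[
 S(x)=\sum_{n}\lambda(n)\,\theta(n)\,n^{-\beta}\,w(n/x),
\]
where $\theta(n)\ge0$ is an auxiliary sieve-type weight. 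Two natural choices are $\theta=1$, or $\theta(n)=\bigl(\sum_{d\mid n,\ d\le z}\mu(d)\rho_d\bigr)^2$ in the Selberg form.

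\textbf{Step 1 (analytic evaluation).} Write $S(x)$ as a contour integral of $\zeta(s+\beta)L(s+\beta,\chi)\widetilde w(s)x^{s}$, twisted by the Dirichlet polynomial coming from $\theta$. Shifting the contour past $s=0$ picks up the pole of $\zeta$ at $s=1-\beta$, which contributes roughly $L(1,\chi)\widetilde w(1-\beta)x^{1-\beta}$ times a sieve factor. The point $s=0$ gives no residue, because $L(\beta,\chi)=0$. The remaining integral on $\operatorname{Re} s=-\tfrac12$ is bounded by convexity by roughly $|q|^{1/2}x^{-1/2}$, times the $\ell^1$ norm of the $\rho_d$.

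\textbf{Step 2 (positivity lower bound).} Bound $S(x)$ from below by $\theta\ge0$ and $\lambda\ge0$, keeping only the square terms $n=m^2$. This gives $S(x)\gg x^{1/2-\beta}$ times a sieve density. Comparing with Step~1 gives $L(1,\chi)x^{1-\beta}\gg x^{1/2-\beta}$ provided the error term is negligible. That comparison is exactly Siegel-type information, so Step~1 alone forces $L(1,\chi)\gg x^{-1/2}$.

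\textbf{Step 3 (closing the loop).} The class number formula, or the upper bound $L(1,\chi)\ll(1-\beta)\log^2|q|$ that the zero at $\beta$ forces through the Deuring--Heilbronn mechanism, should then produce a contradiction once $\eta$ is small. The key quantitative requirement is that $x$ may be taken as a \emph{fixed} power $|q|^{A}$ with the error in Step~1 still smaller than the main term. In that case $x^{1-\beta}=e^{A\eta}$ is bounded, and the two bounds combine to give $\eta\ge c$.

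\textbf{Main obstacle.} The hard part is Step~1's error term versus Step~2's main term. With $\theta=1$, the error $|q|^{1/2}x^{-1/2}$ requires $x>|q|^{1+\epsilon}$. At that length the square-term lower bound gives only $L(1,\chi)\gg|q|^{-1/2-\epsilon}$, which is the classical ineffective barrier.

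My first attempt to break it would be to choose the weights $\rho_d$ to amplify the contribution at $s=1-\beta$ relative to $s=\beta$. Concretely, I would choose $\theta$ so that its Dirichlet polynomial is large near $s=1$ and small along $\operatorname{Re}s=\beta-\tfrac12$, which is effectively a mollifier for $L(s,\chi)$. I would then replace convexity by an averaged (large sieve) bound for the shifted integral.

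I expect this step to carry the entire difficulty. If it fails, the fallback is to use the exceptional zero itself: $\chi(p)=-1$ for most primes $p\le |q|^{C}$, in the Deuring--Heilbronn sense. That makes $\lambda$ sparse on such $n$, which should reduce the size of the off-diagonal contribution well below the convexity bound.
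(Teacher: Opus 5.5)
Your proposal has a genuine gap, and it is not confined to the step you flag: Step~3 is a non sequitur even if Steps~1--2 are granted in full. The requirement you single out there ($x$ a fixed power of $|q|$ with negligible error) is already met at $x=|q|^{1+\epsilon}$, and still nothing follows.

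With a weight supported near $x=|q|^{A}$, the square-term comparison reads $L(1,\chi)x^{1-\beta}\gg x^{1/2-\beta}$. So $x^{1-\beta}$ cancels and its boundedness plays no role. You get $L(1,\chi)\gg|q|^{-A/2}$, and combined with $L(1,\chi)\ll(1-\beta)\log^2|q|$ this gives only $1-\beta\gg|q|^{-A/2}\log^{-2}|q|$.

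With the weight $e^{-t}$, convexity already makes the integral on $\operatorname{Re}(s+\beta)=\frac12$ negligible for any fixed $A>1$. The term $n=1$ then gives $L(1,\chi)\Gamma(1-\beta)x^{1-\beta}\ge1-o(1)$, i.e.\ $L(1,\chi)\gg(1-\beta)e^{-A\eta}$. This is perfectly compatible with $L(1,\chi)\ll(1-\beta)\log^2|q|$ however small $\eta$ is. Even $L(1,\chi)\gg1/\log|q|$ would give only $\eta\gg1/\log^2|q|$ through that upper bound.

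So positivity of $1*\chi$ played against $L(\beta,\chi)=0$ yields no contradiction at the scale $\eta\asymp1$. The obstacle you say carries the entire difficulty is the Landau--Siegel problem itself. The mollifier, large-sieve and sparsification devices you list come with no mechanism for getting past it.

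The missing idea is to stop working with $L(1,\chi)$ and turn into algebra the one input your fallback correctly identifies: most small primes have $\chi(p)=-1$.

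\begin{enumerate}
\item The paper uses the zero only through Lemma~\ref{analytic}. With Mertens this gives $\sum_{p\in\mathcal S}\log p/p\ge\log Q-O(\log|q|+(1-\beta)(\log Q)^2)$ for $Q$ a large power of $|q|$.
\item For each such $p$, Frobenius in $\mathcal O=\Z[\sqrt q,\sqrt2]$ gives $\theta_n^p\equiv\sigma\theta_n$ or $\sigma\tau\theta_n\pmod{p\mathcal O}$. In the basis $x_1^v(x_1^p-x_\ell)^ex_2^jx_3^k$, the values at $(\theta_n,\sigma\theta_n,\sigma\tau\theta_n)$ therefore acquire extra $p$-divisibility.
\item Pigeonholing over $f\in\mathcal L_D$ of small height produces a nonzero $F$ of degree at most $D\approx\frac12((N-1)/3)^{4/3}$. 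Its values at all $N^4$ points lie in $P\mathcal O$ and are small, hence vanish by the norm argument.
\item Masser's zero estimate applies with $\mu(\Gamma)\ge\frac43$, because the kernel vector $(\sqrt{2q},\sqrt2,-\sqrt q,-1)$ has $\Q$-linearly independent coordinates. It forces $\deg F\ge((N-1)/3)^{4/3}>D$.
\end{enumerate}

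The contradiction is Diophantine. The count needs $\sum_{p\in\mathcal S}\log p/p$ to exceed about $\frac54\log B\approx\frac54\log N$. This holds only because nearly all primes up to $Q\approx N^{4/3}$ have $\chi(p)=-1$, giving about $\frac43\log N$. Without an exceptional zero only about half do, giving about $\frac23\log N$. Nothing in your plan plays the role of this auxiliary polynomial and zero estimate.
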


The argument has much in common with OpenAI~\cite{OpenAI} who consider polynomials that vanish modulo many primes when evaluated at linear forms. Our main difference is to replace the interpolation and determinant analysis with the pigeonhole principle to find a polynomial $f$ of bounded degree which vanishes at our linear forms. A result of Masser~\cite{Masser} then contradicts our degree bound for $f$. 

\paragraph{Acknowledgement} ChatGPT assisted with the mathematical development and preparation of this paper. The author thanks  Bill Banks for sharing a draft of his exposition of the OpenAI proof~\cite{Banks} which led to the development of this manuscript.

\paragraph{Analytic input}
We will ultimately derive a contradiction from the following result, which is a straightforward consequence of~\cite[Equation (2), p.~89]{Davenport}.
\begin{lemma}\label{analytic}

Let $\beta\in(0,1)$ be a real zero of $L(s,\chi)$. For every $X\ge3$,
\begin{equation}\label{bias}
 \sum_{\substack{p\le X\\\chi(p)=1}}\frac{\log p}{p}
 \ll \log|q|+(1-\beta)(\log X)^2.
\end{equation}
\end{lemma}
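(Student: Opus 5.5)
The plan is to bound a Dirichlet series with nonnegative coefficients from both sides at a real point $\sigma$ slightly to the right of $1$. Set $\sigma=1+1/\log X$ and consider $-\frac{\zeta'}{\zeta}(\sigma)-\frac{L'}{L}(\sigma,\chi)$, the logarithmic derivative of $\zeta(s)L(s,\chi)$.

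\emph{Lower bound.} Its Dirichlet series is $\sum_n \Lambda(n)(1+\chi(n))n^{-\sigma}$. Since $\chi$ is real, every coefficient is nonnegative. I drop all prime powers and all primes with $\chi(p)\ne1$, and keep only the primes $p\le X$ with $\chi(p)=1$. For such $p$ we have $p^{\sigma-1}=\exp(\log p/\log X)\le e$, so
\[
-\frac{\zeta'}{\zeta}(\sigma)-\frac{L'}{L}(\sigma,\chi)\ \ge\ \frac{2}{e}\sum_{\substack{p\le X\\ \chi(p)=1}}\frac{\log p}{p}.
\]

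\emph{Upper bound, zeta term.} From the pole of $\zeta$ at $s=1$,
\[
-\frac{\zeta'}{\zeta}(\sigma)=\frac{1}{\sigma-1}+O(1)=\log X+O(1).
\]

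\emph{Upper bound, $L$ term.} Here I use the inequality in \cite[Equation (2), p.~89]{Davenport}, obtained from the Hadamard product: for real $\sigma>1$,
\[
-\frac{L'}{L}(\sigma,\chi)< c_1\log|q|-\sum_\rho \operatorname{Re}\frac{1}{\sigma-\rho}.
\]
The sum runs over the nontrivial zeros $\rho$ of $L(s,\chi)$. Every term $\operatorname{Re}\frac{1}{\sigma-\rho}$ is positive because $\operatorname{Re}\rho<1<\sigma$. I therefore discard all of them except the one for $\rho=\beta$. (The companion zero $1-\beta$ is simply discarded too.) This gives
\[
-\frac{L'}{L}(\sigma,\chi)\le c_1\log|q|-\frac{1}{\sigma-\beta}.
\]

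\emph{Combining.} Adding the two upper bounds and comparing with the lower bound gives
\[
\frac{2}{e}\sum_{\substack{p\le X\\ \chi(p)=1}}\frac{\log p}{p}\ \le\ O(1)+c_1\log|q|+\frac{1}{\sigma-1}-\frac{1}{\sigma-\beta}.
\]
The last difference equals
\[
\frac{1-\beta}{(\sigma-1)(\sigma-\beta)}\le\frac{1-\beta}{(\sigma-1)^2}=(1-\beta)(\log X)^2,
\]
using $\sigma-\beta\ge\sigma-1$. This is exactly \eqref{bias}. The $O(1)$ is absorbed into $\log|q|$, since $|q|\ge3$.

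\emph{Main obstacle.} There is no real obstacle; the key point is the cancellation between the pole term $1/(\sigma-1)$ from $\zeta$ and the zero term $1/(\sigma-\beta)$ from $L$. Without it the bound would only be $\log X+\log|q|$. The only point needing care is checking the exact form of Davenport's inequality: the bound holds uniformly for real $\sigma\in(1,2]$, and the sum over $\rho$ carries the sign that allows the other zeros to be discarded.
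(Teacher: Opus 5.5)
Your proof is correct and takes the same route as the paper. The paper gives no argument beyond citing Davenport's inequality (2) on p.~89, and your combination of that inequality at the real point $\sigma=1+1/\log X\in(1,2]$ with the nonnegative Dirichlet coefficients of $-\zeta'/\zeta-L'/L$, together with the cancellation $\frac{1}{\sigma-1}-\frac{1}{\sigma-\beta}\le(1-\beta)(\log X)^2$, is exactly the intended ``straightforward consequence.''
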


\paragraph{Number fields}
As in~\cite{OpenAI}, we work over the field $K$ and the ring $\mathcal O$ given by
\[
 K=\Q(\sqrt q,\sqrt2), \quad \mathcal O=\Z[\sqrt q,\sqrt2].
\]

 The field $K$ has degree four and $\mathcal O$ has $\Z$-basis $1,\sqrt q,\sqrt2,\sqrt{2q}$.  For $\alpha\in K$, let $\alpha^{(1)},\ldots,\alpha^{(4)}$
denote its images under the four embeddings $K\hookrightarrow\mathbb C$,
and define
\[
\|\alpha\|=\max_{1\le t\le4}|\alpha^{(t)}|.
\]

 Let $\sigma,\tau$ be the automorphisms determined by
\[
 \sigma(\sqrt q)=-\sqrt q,\quad \sigma(\sqrt2)=\sqrt2,
 \qquad \tau(\sqrt q)=\sqrt q,\quad \tau(\sqrt2)=-\sqrt2.
\]

For an integer $N\ge4$ and a vector $n\in\{0,\ldots,N-1\}^4$, write
\begin{align}
\label{eq:thetadef}
 \theta_n=n_1+n_2\sqrt q+n_3\sqrt2+n_4\sqrt{2q}.
\end{align}
For $p\nmid2q$ the action of Frobenius modulo $p$, see~\cite[\S8, Example 8.19]{Milne}, gives
\[
 (\sqrt q)^p\equiv\chi(p)\sqrt q,\qquad
 (\sqrt2)^p\equiv(2/p)\sqrt2\pmod{p\mathcal O}.
\]

Hence, if $\chi(p)=-1$ then
\begin{align}
 \theta_n^p&\equiv\sigma\theta_n\pmod{p\mathcal O}
 &&\text{if }(2/p)=1,\label{eq:2+1}\\
 \theta_n^p&\equiv\sigma\tau\theta_n\pmod{p\mathcal O}
 &&\text{if }(2/p)=-1.\label{eq:2-1}
\end{align}

\paragraph{Polynomials}
For an integer $D\ge0$, let $\mathcal L_D$ be the $\mathcal O$-lattice generated by the monomials 
\begin{align}
\label{eq:basis1}
x_1^ix_2^jx_3^k, \quad i+8(j+k)\le D.
\end{align}
 Thus its elements are
\begin{align}
\label{eq:f123}
f(x_1,x_2,x_3)=\sum_{i+8(j+k)\le D}c_{i,j,k}x_1^ix_2^jx_3^k,
\qquad c_{i,j,k}\in\mathcal O.
\end{align}
The following lemma provides an alternative basis for $\mathcal L_D$.

\begin{lemma}\label{basis}
For any prime $p>8$ and $\ell\in \{2,3\}$ the set of polynomials
\begin{align}
\label{eq:basis2}
 G_{i,j,k}(x_1,x_2,x_3)=x_1^v(x_1^p-x_{\ell})^e x_2^jx_3^k, \quad i=pe+v, \quad 0\le v<p, \quad i+8(j+k)\le D
\end{align}
forms a basis of $\mathcal L_D$.

\end{lemma}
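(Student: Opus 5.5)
The plan is to show that the polynomials $G_{i,j,k}$ lie in $\mathcal L_D$ and are related to the monomial basis \eqref{eq:basis1} by a change of basis matrix with integer entries that is unitriangular, so that it is invertible over $\Z$ and hence over $\mathcal O$. Throughout, assign to a monomial $x_1^ax_2^bx_3^c$ the weight $w(a,b,c)=a+8(b+c)$, so that $\mathcal L_D$ is exactly the free $\mathcal O$-module spanned by the monomials of weight at most $D$.

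First I expand each $G_{i,j,k}$ by the binomial theorem. Write $i=pe+v$ and fix $\ell\in\{2,3\}$. Then
\[
 G_{i,j,k}=\sum_{r=0}^{e}\binom{e}{r}(-1)^r\,x_1^{v+p(e-r)}x_\ell^{\,r}x_2^jx_3^k .
\]
The $r$-th term is a monomial with integer coefficient. Its weight is $v+p(e-r)+8(j+k+r)=i+8(j+k)-r(p-8)$. Since $p>8$, this weight is at most $i+8(j+k)\le D$, so every term lies in $\mathcal L_D$ and hence $G_{i,j,k}\in\mathcal L_D$. Moreover, the $r=0$ term is exactly $x_1^ix_2^jx_3^k$ with coefficient $1$. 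Every term with $r\ge1$ has weight strictly smaller than $w(i,j,k)$, again using $p>8$.

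Next I compare the two families. The $G_{i,j,k}$ and the monomials $x_1^ix_2^jx_3^k$ are indexed by the same finite set $\{(i,j,k): i+8(j+k)\le D\}$. I order this index set by weight, breaking ties arbitrarily. By the previous step,
\[
 G_{i,j,k}=x_1^ix_2^jx_3^k+(\text{$\Z$-combination of monomials of strictly smaller weight}).
\]
Thus the matrix expressing the $G$'s in terms of the monomials is block lower unitriangular. The diagonal blocks, corresponding to monomials of equal weight, are identity matrices. Its determinant is therefore $1$, so the matrix is invertible over $\Z\subset\mathcal O$. Equivalently, one can run an induction on weight: each monomial $x_1^ix_2^jx_3^k$ equals $G_{i,j,k}$ minus an $\mathcal O$-combination of lower-weight monomials, which by induction lie in the $\mathcal O$-span of the $G$'s. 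Hence the $G$'s span $\mathcal L_D$. Since they are as many as the rank of the free module $\mathcal L_D$ and the transition matrix is invertible, they are also linearly independent, so they form a basis.

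The only step needing care is the weight computation, which is where the hypothesis $p>8$ is used. Substituting $x_\ell$ for $x_1^p$ lowers the weight by $p-8>0$, which makes the leading monomial unique. The rest is routine linear algebra over $\mathcal O$.
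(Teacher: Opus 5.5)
Your proposal is correct and follows the paper's argument: the paper's proof simply asserts that the change-of-basis matrix from the monomials to the $G_{i,j,k}$ is triangular with ones on the diagonal. Your binomial expansion, with the weight drop of $r(p-8)>0$ for each $r\ge1$, supplies the ordering (by the weight $i+8(j+k)$) under which this triangularity holds, and it checks that each $G_{i,j,k}$ lies in $\mathcal L_D$.
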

\begin{proof}
This follows from the fact that the change of basis matrix from~\eqref{eq:basis1} to~\eqref{eq:basis2} is a
 triangular matrix with diagonal entries one.
\end{proof}

For $B\ge1$ and $f$ as in~\eqref{eq:f123} define
\[
\|f\|_B=
\max_{i+8(j+k)\le D}\|c_{ijk}\|B^{i+j+k}.
\]

We will apply the pigeonhole principle to sets of the form 
$\{f\in \mathcal L_D \ : \ \|f\|_B\le H\}.$
Our next result will be used to show that these sets are large enough for our pigeonhole application.
\begin{lemma}\label{coefficient-count}
For $H>0$ and $B\ge1$,
\[
\#\{f\in\mathcal L_D:\|f\|_B\le H\}
>
\prod_{i+8(j+k)\le D}
\left(\frac{H}{8\sqrt{|q|}\,B^{i+j+k}}\right)^4.
\]
\end{lemma}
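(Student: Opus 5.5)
The set factors as a product over the coefficient positions $(i,j,k)$ with $i+8(j+k)\le D$. This is because $\|f\|_B\le H$ holds exactly when $\|c_{ijk}\|\le H/B^{i+j+k}$ for every index, and the coefficients $c_{ijk}\in\mathcal O$ can be chosen independently. It therefore suffices to prove, for every real $T>0$,
\[
\#\{\alpha\in\mathcal O:\|\alpha\|\le T\}>\left(\frac{T}{8\sqrt{|q|}}\right)^4,
\]
and then take $T=H/B^{i+j+k}$ and multiply the resulting bounds.

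To count elements of $\mathcal O$, write $\alpha=a+b\sqrt q+c\sqrt2+d\sqrt{2q}$ with $a,b,c,d\in\Z$. Each conjugate $\alpha^{(t)}$ is $\pm a\pm b\sqrt q\pm c\sqrt2\pm d\sqrt{2q}$, with $\sqrt q$ possibly imaginary when $q<0$. The triangle inequality then gives
\[
\|\alpha\|\le |a|+|b|\sqrt{|q|}+|c|\sqrt2+|d|\sqrt{2|q|}.
\]
Each of the four terms is at most $T/4$ provided
\[
|a|\le \frac T4,\quad |b|\le \frac{T}{4\sqrt{|q|}},\quad |c|\le\frac{T}{4\sqrt2},\quad |d|\le \frac{T}{4\sqrt{2|q|}}.
\]
It is convenient to impose the single stronger condition $|a|,|b|,|c|,|d|\le T/(4\sqrt{2|q|})=:R$, which implies all four since $|q|\ge3$. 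Every such integer vector then gives $\|\alpha\|\le T$.

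The number of integers $m$ with $|m|\le R$ is $2\lfloor R\rfloor+1$, which is greater than $2R$ for every $R\ge0$. Writing $\lfloor R\rfloor=R-\{R\}$, we get $2\lfloor R\rfloor+1=2R+1-2\{R\}>2R-1+1$; more simply, $2\lfloor R\rfloor+1\ge 2(R-1)+... $ is not needed, because $\lfloor R\rfloor>R-1$ already gives $2\lfloor R\rfloor+1>2R-1$. That is too weak, so I use $2\lfloor R\rfloor+1>R$ instead. This holds in all cases: if $R<1$ the left side is $1>R$, and if $R\ge1$ then $2\lfloor R\rfloor+1>2(R-1)+1\ge R$.

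This yields
\[
\#\{\alpha:\|\alpha\|\le T\}>R^4=\left(\frac{T}{4\sqrt2\sqrt{|q|}}\right)^4\ge\left(\frac{T}{8\sqrt{|q|}}\right)^4,
\]
since $4\sqrt2<8$. The inequality stays strict for every $T>0$, including small $T$, because $\alpha=0$ is always counted. Taking the product over all indices preserves the strict inequality, since each factor is positive.

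The only delicate point is this constant bookkeeping, and it works with room to spare.
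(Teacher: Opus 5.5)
Your argument is correct and is essentially the paper's: both write each coefficient as $u_1+u_2\sqrt q+u_3\sqrt2+u_4\sqrt{2q}$ with integer $u_t$ in a box, bound $\|\cdot\|$ by the triangle inequality, and count the box; the paper uses the one-sided box $0\le u_t\le\lfloor H/(8\sqrt{|q|}B^{i+j+k})\rfloor$ instead of your symmetric one, which only changes constants. Delete the stray claim that $2\lfloor R\rfloor+1>2R$ for all $R\ge0$, which fails at $R=1.9$; the bound you actually use, $2\lfloor R\rfloor+1>R$, is correct and is all you need.
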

\begin{proof}
In~\eqref{eq:f123} we take
\[
c_{i,j,k}=u_1+u_2\sqrt q+u_3\sqrt2+u_4\sqrt{2q},
\qquad
0\le u_t\le
\left\lfloor\frac{H}{8\sqrt{|q|}\,B^{i+j+k}}\right\rfloor.
\]
These choices satisfy $\|c_{ijk}\|B^{i+j+k}\le H$. Since there are
more than
$(H/(8\sqrt{|q|}\,B^{i+j+k}))^4$ choices for each coefficient the result follows.
\end{proof}

\paragraph{Vanishing modulo primes}

In order to contradict Lemma~\ref{analytic}, we will construct a polynomial from $\mathcal L_D$ which vanishes modulo many small primes $p$. Degree and height bounds will then force it to vanish over $\mathcal{O}$. As in~\cite{OpenAI}, a sufficient degree bound will crucially rely on evaluating $f(x_1,x_2,x_3)\in \mathcal L_D$ at points  
\begin{align}
\label{eq:linearformpoints}
(x_1,x_2,x_3)=(\theta_n,\sigma\theta_n,\sigma\tau\theta_n) \quad n\in\{0,\ldots,N-1\}^4.
\end{align}

Our next  lemma will be combined with the pigeonhole principle to find a nonzero \(f\) vanishing at all points~\eqref{eq:linearformpoints}. 

\begin{lemma}\label{evaluation-residues}
Let $\mathcal S$ be a finite set of primes $p>8$ satisfying
 $p\nmid2q$ and $\chi(p)=-1$. For each $p\in \mathcal S$, let $k_p$ be an integer satisfying $k_p\ge\max\{1,\lfloor D/p\rfloor\}$ and put
\[
 r=\operatorname{rank}_{\mathcal O}\mathcal L_D,\qquad
 E_p=\sum_{i+8(j+k)\le D}\left\lfloor\frac{i}{p}\right\rfloor,
 \qquad
 P=\prod_{p\in\mathcal S}p^{k_p}.
\] We have 
\[
\#\left\{
 \bigl(f(\theta_n,\sigma\theta_n,\sigma\tau\theta_n)
       \bmod P\mathcal O\bigr)_{n\in\{0,\ldots,N-1\}^4}
 : f\in\mathcal L_D
\right\}
\le P^{4r}\prod_{p\in\mathcal S}p^{-4E_p}.
\]
\end{lemma}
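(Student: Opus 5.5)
The plan is to view the evaluation map
\[
\Phi:\ \mathcal L_D\to\prod_{n\in\{0,\ldots,N-1\}^4}\mathcal O/P\mathcal O,\qquad f\mapsto \bigl(f(\theta_n,\sigma\theta_n,\sigma\tau\theta_n)\bmod P\mathcal O\bigr)_n,
\]
as a homomorphism of $\mathcal O$-modules, and to bound the size of its image one prime at a time. The distinct primes $p\in\mathcal S$ give pairwise coprime ideals $p^{k_p}\mathcal O$, whose intersection is $P\mathcal O$. By the Chinese remainder theorem, the image of $\Phi$ therefore injects into the product over $p\in\mathcal S$ of the images of the maps $\Phi_p$, where $\Phi_p$ is reduction modulo $p^{k_p}\mathcal O$. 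It thus suffices to show, for each $p\in\mathcal S$, that
\[
\#\operatorname{im}\Phi_p\le p^{4rk_p-4E_p}.
\]
Multiplying over $p$ then gives exactly $P^{4r}\prod_{p}p^{-4E_p}$.

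Fix $p\in\mathcal S$ and choose $\ell$ according to the Frobenius congruences. Take $\ell=2$ if $(2/p)=1$, so that \eqref{eq:2+1} gives $\theta_n^p\equiv\sigma\theta_n$. Take $\ell=3$ if $(2/p)=-1$, so that \eqref{eq:2-1} gives $\theta_n^p\equiv\sigma\tau\theta_n$. In either case the point \eqref{eq:linearformpoints} satisfies $x_1^p-x_\ell\in p\mathcal O$.

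Now expand $f$ in the basis $G_{i,j,k}$ of Lemma~\ref{basis}, writing $f=\sum b_{i,j,k}G_{i,j,k}$. Because the change of basis is unitriangular over $\Z$, the coefficients $b_{i,j,k}$ lie in $\mathcal O$, and $(b_{i,j,k})$ runs over all of $\mathcal O^r$ as $f$ runs over $\mathcal L_D$. At each point \eqref{eq:linearformpoints} the factor $(x_1^p-x_\ell)^e$ lies in $p^{e}\mathcal O$, where $e=\lfloor i/p\rfloor$. Hence the contribution $b_{i,j,k}G_{i,j,k}(\theta_n,\sigma\theta_n,\sigma\tau\theta_n)$ modulo $p^{k_p}\mathcal O$ depends only on $b_{i,j,k}$ modulo $p^{k_p-e}\mathcal O$. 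For this we need $e\le k_p$, which holds because $e\le\lfloor D/p\rfloor\le k_p$.

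Consequently $\Phi_p$ factors through
\[
\bigoplus_{i+8(j+k)\le D}\mathcal O/p^{k_p-\lfloor i/p\rfloor}\mathcal O .
\]
Since $\mathcal O\cong\Z^4$, this group has size
\[
\prod p^{4(k_p-\lfloor i/p\rfloor)}=p^{4rk_p-4E_p},
\]
which is the required bound.

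The only delicate step is making sure the basis change of Lemma~\ref{basis} is legitimately used over $\mathcal O$, so that the coefficients $b_{i,j,k}$ range over all of $\mathcal O$ and not some larger or smaller module. A second point to check is that the factorisation through $\mathcal O/p^{k_p-e}$ is well defined, which is exactly what the hypothesis $k_p\ge\lfloor D/p\rfloor$ ensures. The hypotheses $p>8$, $p\nmid 2q$ and $\chi(p)=-1$ enter only through Lemma~\ref{basis} and the Frobenius congruences \eqref{eq:2+1}--\eqref{eq:2-1}.
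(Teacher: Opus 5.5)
Your proposal is correct and follows the paper's proof essentially step for step: fix $p$, choose $\ell$ from the Frobenius congruences, expand in the basis $G_{i,j,k}$ of Lemma~\ref{basis} so that each coefficient only matters modulo $p^{k_p-\lfloor i/p\rfloor}\mathcal O$, count the possibilities, and combine the primes by the Chinese remainder theorem. Your explicit check that $\lfloor i/p\rfloor\le\lfloor D/p\rfloor\le k_p$, which keeps the exponents nonnegative, is a detail the paper leaves implicit.
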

\begin{proof}
Fix $p\in\mathcal S$ and take $\ell=2$ or $3$ according as $(2/p)=1$ or $-1$. By Lemma~\ref{basis} each $f\in\mathcal L_D$ can be expressed in the form
\[
f(x_1,x_2,x_3)=\sum_{i+8(j+k)\le D}c_{i,j,k}G_{i,j,k}(x_1,x_2,x_3),
\qquad c_{i,j,k}\in\mathcal O.
\]
By~\eqref{eq:2+1} and \eqref{eq:2-1}, for each $n\in\{0,\ldots,N-1\}^4$ 
$$G_{ijk}(\theta_n,\sigma \theta_n, \sigma\tau \theta_n)\equiv 0 \pmod{p^{\lfloor i/p\rfloor}\mathcal O}.$$ Consequently, the values of
\[
c_{i,j,k}\bmod p^{k_p-\lfloor i/p\rfloor}\mathcal O
\]
determine the  vector 
$$\bigl(f(\theta_n,\sigma\theta_n,\sigma\tau\theta_n)
       \bmod p^{k_p}\mathcal O\bigr)_{n\in\{0,\ldots,N-1\}^4}.$$
Hence
\begin{align*}
\#\left\{
\bigl(f(\theta_n,\sigma\theta_n,\sigma\tau\theta_n)
\bmod p^{k_p}\mathcal O\bigr)_{n\in\{0,\ldots,N-1\}^4}
:f\in\mathcal L_D
\right\}
&\le
\prod_{i+8(j+k)\le D}
\#\left(\mathcal O/p^{k_p-\lfloor i/p\rfloor}\mathcal O\right)\\
&=
\prod_{i+8(j+k)\le D}p^{4(k_p-\lfloor i/p\rfloor)}
=p^{4k_pr-4E_p}
\end{align*}
 and the Chinese remainder theorem gives
\[
\#\left\{
\bigl(f(\theta_n,\sigma\theta_n,\sigma\tau\theta_n)
\bmod P\mathcal O\bigr)_{n\in\{0,\ldots,N-1\}^4}
:f\in\mathcal L_D
\right\}
\le
\prod_{p\in\mathcal S}p^{4k_pr-4E_p}
=P^{4r}\prod_{p\in\mathcal S}p^{-4E_p}.
\]
\end{proof}
\begin{lemma}\label{pigeonholing}
Fix a positive integer $\gamma$ and put
\begin{gather*}
N=|q|^\gamma,\qquad B=8N\sqrt{|q|},\qquad
D=\left\lfloor\frac12\left(\frac{N-1}{3}\right)^{4/3}\right\rfloor,\\
Q=\frac{D}{(\log N)^2},\qquad
\mathcal S=\{p:8<p\le Q,\ p\nmid2q,\ \chi(p)=-1\}.
\end{gather*}
For sufficiently large $N$, if
\begin{align}
\label{eq:PS}
\sum_{p\in\mathcal S}\frac{\log p}{p}
>\frac{13}{10}\log B,
\end{align}
then there exists a nonzero polynomial $F\in\mathcal L_D$ such that
\begin{align}
\label{eq:Fconds}
F(\theta_n,\sigma\theta_n,\sigma\tau\theta_n)=0
\qquad\text{for every }n\in\{0,\ldots,N-1\}^4.
\end{align}
\end{lemma}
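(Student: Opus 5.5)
The plan is a Siegel-lemma style pigeonhole argument. We compare the count of Lemma~\ref{coefficient-count} with the residue count of Lemma~\ref{evaluation-residues}. We take $k_p=\max\{1,\lfloor D/p\rfloor\}$ for $p\in\mathcal S$, let $P$ and $r$ be as in Lemma~\ref{evaluation-residues}, and set $H=P/(3r)$. Suppose
\[
\#\{f\in\mathcal L_D:\|f\|_B\le H\}>P^{4r}\prod_{p\in\mathcal S}p^{-4E_p}.
\]
Then by Lemma~\ref{evaluation-residues} two distinct $f,g$ in this set have the same evaluation vector modulo $P\mathcal O$. Hence $F=f-g\neq0$ satisfies $\|F\|_B\le 2H$ and $F(\theta_n,\sigma\theta_n,\sigma\tau\theta_n)\in P\mathcal O$ for every $n$.

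Next we show that these values actually vanish. Since $0\le n_t<N$, every conjugate of $\theta_n$, $\sigma\theta_n$ and $\sigma\tau\theta_n$ has absolute value at most $N(1+\sqrt{|q|}+\sqrt2+\sqrt{2|q|})\le 8N\sqrt{|q|}=B$. So for each embedding $t$,
\[
\bigl|F(\theta_n,\sigma\theta_n,\sigma\tau\theta_n)^{(t)}\bigr|\le\sum_{i+8(j+k)\le D}\|c_{ijk}\|B^{i+j+k}\le r\|F\|_B\le 2rH<P.
\]
A nonzero element of $P\mathcal O$ has norm divisible by $P^4$, hence of absolute value at least $P^4$. Here the norm is less than $P^4$, so the value is $0$, which gives \eqref{eq:Fconds}.

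It remains to verify the displayed counting inequality, and this is the main step. By Lemma~\ref{coefficient-count}, taking fourth roots and logarithms, it suffices that
\[
\sum_{p\in\mathcal S}E_p\log p\ \ge\ r\log(24r\sqrt{|q|})+S\log B,\qquad S=\sum_{i+8(j+k)\le D}(i+j+k).
\]
We estimate the ingredients by lattice-point counting in the simplex $i+8(j+k)\le D$:
\[
r=\frac{D^3}{384}\bigl(1+O(1/D)\bigr),\qquad \sum i=\frac{rD}{4}\bigl(1+O(1/D)\bigr),\qquad \sum(j+k)=\frac{rD}{16}\bigl(1+O(1/D)\bigr).
\]
These give $S\sim \tfrac{5}{16}rD$. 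Since $\lfloor i/p\rfloor\ge i/p-1$, we get $E_p\ge \frac{rD}{4p}(1+O(1/D))-r$. Summing over $p\in\mathcal S$ then gives
\[
\sum_{p\in\mathcal S}E_p\log p\ \ge\ \frac{rD}{4}\bigl(1+O(1/D)\bigr)\sum_{p\in\mathcal S}\frac{\log p}{p}-O(rQ).
\]

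Under \eqref{eq:PS} the main term exceeds $\tfrac{13}{40}rD\log B$. This beats $\tfrac{5}{16}rD\log B=\tfrac{12.5}{40}rD\log B$ by a margin $\gg rD\log B$. The error terms are all smaller than this margin. First, $rQ=rD/(\log N)^2=o(rD\log B)$. Second, $r\log(24r\sqrt{|q|})\ll r(\log D+\log|q|)=o(rD)$, since $D$ is a power of $N$. Third, the $O(1/D)$ relative errors, multiplied by $\sum_{p\in\mathcal S}\log p/p\ll\log Q$, contribute $O(r\log Q)$, which is also negligible.

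I expect the only delicate point to be keeping these simplex-sum asymptotics uniform with explicit $O(1/D)$ errors. Once that is done, the slack between $13/10$ and $5/4$ absorbs everything for $N$ sufficiently large.
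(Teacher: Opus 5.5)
Your proposal is correct and follows the paper's proof essentially step by step. Both arguments pigeonhole with Lemmas~\ref{coefficient-count} and~\ref{evaluation-residues} using $k_p=\max\{1,\lfloor D/p\rfloor\}$, use the norm argument to upgrade vanishing modulo $P\mathcal O$ to exact vanishing, and use simplex asymptotics to reduce the counting condition to $\sum_{p\in\mathcal S}\frac{\log p}{p}>\frac54\log B$ plus negligible errors. The only differences are cosmetic: you take $H=P/(3r)$ rather than $P/(4r)$, and you use relative error terms together with the lower bound $\lfloor i/p\rfloor\ge i/p-1$ where the paper uses absolute $O(D^3)$ errors.
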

\begin{proof}
Take $k_p=\max\{1,\lfloor D/p\rfloor\}$, and let $P,r,E_p$ be as in Lemma~\ref{evaluation-residues}. Put
\[
W=\sum_{i+8(j+k)\le D}(i+j+k),\qquad H=\frac{P}{4r}.
\]
By Lemma~\ref{coefficient-count}, the number of polynomials $f\in\mathcal L_D$ with $\|f\|_B\le H$ exceeds
\[
\left(\frac{P}{32r\sqrt{|q|}}\right)^{4r}B^{-4W}.
\]
If
\begin{equation}\label{eq:count-condition}
\sum_{p\in\mathcal S}E_p\log p
>W\log B+r\log(32r\sqrt{|q|}).
\end{equation}
then by Lemma~\ref{evaluation-residues} there exists distinct $f_1,f_2\in \mathcal L_D$ satisfying 
$$\|f_1\|_B,\|f_2\|_B\le H$$
and
$$f_1(\theta_n,\sigma\theta_n,\sigma\tau\theta_n)\equiv f_2(\theta_n,\sigma\theta_n,\sigma\tau\theta_n) \pmod{P\mathcal O}, \quad n\in \{0,\dots,N-1\}^{4}.$$
Taking $F=f_1-f_2$, we have
\[
\|\theta_n\|,\ \|\sigma\theta_n\|,\ \|\sigma\tau\theta_n\|\leq B,
\]
and hence
\[
\|F(\theta_n,\sigma\theta_n,\sigma\tau\theta_n)\|
\leq 2rH=\frac{P}{2}.
\]
On the other hand,
\[
F(\theta_n,\sigma\theta_n,\sigma\tau\theta_n)\in P\mathcal O.
\]
If this value were nonzero, its norm from $K$ to $\mathbb Q$ would be a nonzero integer divisible by $P^4$, while the absolute value of this norm is at most $(P/2)^4<P^4$. This proves~\eqref{eq:Fconds}.

It remains to verify that~\eqref{eq:PS} implies~\eqref{eq:count-condition}. Uniformly for $p\leq Q$, we have
\[
r=\frac{D^3}{384}+O(D^2),\qquad
W=\frac{5D^4}{6144}+O(D^3), \quad E_p=\frac{D^4}{1536p}+O(D^3).
\]
Consequently,
\[
\sum_{p\in\mathcal S}E_p\log p
=\frac{D^4}{1536}\sum_{p\in\mathcal S}\frac{\log p}{p}
+O(D^3Q)
\]
from which~\eqref{eq:count-condition} follows.
\end{proof}

\paragraph{Nonvanishing of polynomials}

We will combine Lemmas~\ref{analytic} and~\ref{pigeonholing} to produce a vanishing polynomial of small degree. 
Our contradiction will come from appealing to the following result of Masser~\cite{Masser}. This replaces the interpolation lemma and greedy selection used in OpenAI~\cite{OpenAI}.

Given a lattice $\Gamma\subseteq \mathbb{C}^n$ define
\[
 \mu(\Gamma)=\min_{L\subsetneq\mathbb C^n}
 \frac{\operatorname{rank}_{\mathbb Z}(\Gamma/(\Gamma\cap L))}
 {n-\dim_{\mathbb C}L}.
\]

\begin{lemma}\label{masser}
Let $n,M\ge1$ be integers, let $\gamma_1,\ldots,\gamma_s\in\mathbb C^n$, and let $\Gamma$ denote the lattice generated by $\gamma_1,\dots,\gamma_s$.  If a nonzero polynomial $F\in\C[x_1,\ldots,x_n]$ vanishes at every
$\sum_{j=1}^s m_j\gamma_j$, with integers $0\le m_j\le M$, then its total degree satisfies
\[
 \deg F\ge(M/n)^{\mu(\Gamma)}.
\]

\end{lemma}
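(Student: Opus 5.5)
We plan to obtain Lemma~\ref{masser} from a zero estimate on the additive group $\mathbb G_a^n=\C^n$, in the Masser--W\"ustholz/Philippon form, and then to convert that estimate into the exponent $\mu(\Gamma)$ by counting lattice points modulo a subspace. Write $T=\deg F$ and, for an integer $M'\ge0$, put $\Gamma(M')=\{\sum_j m_j\gamma_j:0\le m_j\le M'\}$. We take $M'=\lfloor M/n\rfloor$. Since $\Gamma(M')+\cdots+\Gamma(M')$ ($n$ summands) is contained in $\Gamma(M)$, the polynomial $F$ vanishes on the $n$-fold sumset of $\Gamma(M')$. This is exactly the hypothesis of the zero estimate for commutative algebraic groups with $\Sigma=\Gamma(M')$, where we require vanishing to order $1$ and need no derivatives.

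The connected algebraic subgroups of $\mathbb G_a^n$ are exactly the $\C$-linear subspaces $L$, and each has degree $1$. For $\mathbb G_a^n$ the zero estimate therefore reads as follows. There is a proper subspace $L\subsetneq\C^n$ such that
\[
\#\bigl((\Gamma(M')+L)/L\bigr)\le T^{\,n-\dim L}.
\]
We intend to quote this, for instance from Philippon's ``Lemmes de z\'eros dans les groupes alg\'ebriques commutatifs'', rather than reprove it. If a self-contained argument is wanted, the proof runs as follows. Suppose no such $L$ exists. Intersect $Z(F)$ successively with translates $Z(F(x+\gamma))$ for $\gamma\in\Gamma(M')$. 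Use B\'ezout's theorem to keep the degrees of the resulting varieties at most $T^{\text{codim}}$. At each step, the stabiliser of a component gives the subspace $L$ that must occur. This elimination step is the main obstacle, and it is where the factor $n$ in $M/n$ is consumed.

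Now we bound the left side from below. Let $\rho=\operatorname{rank}_{\Z}(\Gamma/(\Gamma\cap L))$. The images of $\gamma_1,\dots,\gamma_s$ in $\C^n/L$ generate a free abelian group of rank $\rho$, so some $\rho$ of them, say $\gamma_{j_1},\dots,\gamma_{j_\rho}$, have $\Z$-linearly independent images. The points $\sum_{t}m_{j_t}\gamma_{j_t}$ with $0\le m_{j_t}\le M'$ are then pairwise distinct modulo $L$. Hence
\[
(M'+1)^{\rho}\le T^{\,n-\dim L}.
\]
Since $M'+1\ge M/n$, we get $T\ge (M/n)^{\rho/(n-\dim L)}$. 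By the definition of $\mu(\Gamma)$ as a minimum over proper subspaces, $\rho/(n-\dim L)\ge\mu(\Gamma)$. If $M/n\ge1$ this yields $\deg F\ge(M/n)^{\mu(\Gamma)}$. If $M/n<1$, the bound is trivial whenever $\deg F\ge1$. In the remaining case $F$ is a nonzero constant, which cannot vanish at the point $0\in\Gamma(M)$, so this case does not occur.
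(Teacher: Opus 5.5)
Your argument is correct, granted the zero estimate you quote. The paper does not prove this lemma; it quotes it as Masser's theorem in exactly this form. You instead derive it from Philippon's general zero lemma, specialised to $\mathbb{G}_a^n\subset\mathbb{P}^n$. There the connected algebraic subgroups are linear subspaces whose closures have degree $1$, and with plain vanishing the binomial factor is $1$, so your stated form $\#\bigl((\Gamma(M')+L)/L\bigr)\le T^{\,n-\dim L}$ is the right specialisation. After that, the remaining work is elementary and you do it correctly. Passing to $M'=\lfloor M/n\rfloor$, whose $n$-fold sumset lies in $\Gamma(M)$, is exactly where the loss $M/n$ comes from. Choosing $\rho$ generators that are independent modulo $L$ gives $(M'+1)^\rho$ distinct classes, which is where the exponent $\rho/(n-\dim L)\ge\mu(\Gamma)$ comes from. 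The degenerate cases ($\rho=0$, $M<n$, constant $F$) also come out correctly.

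What your route buys is transparency: it shows how $\mu(\Gamma)$ and the factor $n$ arise from a single structural zero estimate. What it costs is that the black box is heavier than the one the paper cites, so the result is no more self-contained. Your Bézout/stabiliser paragraph is only a heuristic outline of the zero lemma's proof. It does not address the real difficulties: keeping the successive intersections proper, and extracting the factor $\#\bigl((\Sigma+L)/L\bigr)$ from the translates of a stabilised component. So present it as motivation, not as an optional proof.
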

Our next result gives the lower bound for $\mu(\Gamma)$ required in Lemma~\ref{masser}.
\begin{lemma}
\label{dim}
Let $\pi:\mathbb C^4\to\mathbb C^3$ be surjective. Suppose
$\ker\pi$ is not contained in any proper complex subspace
spanned by rational vectors. Then
\[
\mu\bigl(\pi(\mathbb Z^4)\bigr)\geq\frac43.
\]
\end{lemma}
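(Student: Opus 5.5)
We must show that for every proper subspace $L\subsetneq\C^3$ (here $n=3$ and $\Gamma=\pi(\Z^4)$),
\[
\operatorname{rank}_\Z\bigl(\Gamma/(\Gamma\cap L)\bigr)\ge \tfrac43\,(3-\dim L).
\]
The plan is to translate each subspace $L\subseteq\C^3$ back to $\C^4$ via $\pi$, and to check the inequality separately for $\dim L=0,1,2$.

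\textbf{Reduction to $\C^4$.} Put $V=\pi^{-1}(L)\subseteq\C^4$. This space contains $\ker\pi$, which is a line since $\pi$ is surjective, and
\[
\dim_\C V=\dim L+1.
\]
The surjection $\Z^4\to\Gamma\to\Gamma/(\Gamma\cap L)$ has kernel $\Z^4\cap V$, so
\[
\operatorname{rank}_\Z\bigl(\Gamma/(\Gamma\cap L)\bigr)=4-\operatorname{rank}_\Z(\Z^4\cap V).
\]
Let $W\subseteq V$ be the complex span of $\Z^4\cap V$. Then $W$ is spanned by rational vectors and $\operatorname{rank}(\Z^4\cap V)=\dim_\C W$; the latter holds because a $\Z$-basis of $\Z^4\cap V$ is $\Q$-independent, hence $\C$-independent.

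\textbf{Key inequality.} The hypothesis gives $\ker\pi\not\subseteq W$, since $W$ is a rational subspace and it is proper: $W=\C^4$ would force $V=\C^4$ and hence $L=\C^3$. As $W\subseteq V$ and $\ker\pi\subseteq V$ but $\ker\pi\not\subseteq W$, we get $W\subsetneq V$, so
\[
\dim W\le \dim V-1=\dim L.
\]
Therefore
\[
\operatorname{rank}\bigl(\Gamma/(\Gamma\cap L)\bigr)\ge 4-\dim L.
\]

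\textbf{Case check.} It remains to verify $4-d\ge \tfrac43(3-d)$ for $d=\dim L\in\{0,1,2\}$:
\begin{itemize}
\item[] $d=0$: $\;4\ge 4$;
\item[] $d=1$: $\;3\ge 8/3$;
\item[] $d=2$: $\;2\ge 4/3$.
\end{itemize}
All three hold, with the minimum attained at $L=0$, which gives the value $4/3$. Taking the minimum over $L$ yields $\mu(\Gamma)\ge 4/3$.

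The only delicate step is the key inequality. One must use that $W$ is the smallest rational subspace containing $\Z^4\cap V$, and that the rank of $\Z^4\cap V$ equals $\dim_\C W$. This can be seen either via a $\Q$-basis or by noting that $\Z^4\cap V=\Z^4\cap W$, and that a discrete subgroup of $\Z^4$ spanning $W$ has rank equal to $\dim_\Q(W\cap\Q^4)=\dim_\C W$.
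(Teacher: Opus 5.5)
Your proof is correct and follows essentially the same route as the paper: pull $L$ back to $\C^4$, take the complex span $W$ of $\pi^{-1}(L)\cap\Z^4$, use the hypothesis to get $\ker\pi\not\subseteq W$ and hence $\dim W\le\dim L$, and finish by minimizing $(4-h)/(3-h)$ over $0\le h<3$. The differences are only in phrasing: you write the dimension drop as $W\subsetneq\pi^{-1}(L)$ and compute the quotient rank from the kernel of $\Z^4\to\Gamma/(\Gamma\cap L)$, while the paper uses $\ker\pi\cap W=\{0\}$ to get $\dim\pi(W)=\dim W$ and subtracts $\operatorname{rank}(\Gamma\cap L)$ from $\operatorname{rank}\Gamma=4$.
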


\begin{proof}
Put $\Gamma=\pi(\mathbb Z^4)$. The hypothesis makes $\pi$
injective on $\mathbb Z^4$, so $\Gamma$ has rank $4$.

For a proper subspace $L\subset\mathbb C^3$, set
\[
W=\operatorname{span}_{\mathbb C}
\bigl(\pi^{-1}(L)\cap\mathbb Z^4\bigr).
\]
Since $W$ is proper and spanned by rational vectors our assumption implies
$\ker\pi\cap W=\{0\}$, and
\[
\operatorname{rank}_{\mathbb Z}(\Gamma\cap L)
=\dim_{\mathbb C}W
=\dim_{\mathbb C}\pi(W)
\leq\dim_{\mathbb C}L.
\]
Thus
\[
\mu(\Gamma)\geq
\min_{0\leq h<3}\frac{4-h}{3-h}=\frac43.
\]

\end{proof}

\begin{proof}[Proof of Theorem~\ref{thm:nosiegel}]
Suppose that 
$$(1-\beta)\log|q|<\varepsilon.$$ Fix a sufficiently large positive integer $\gamma$ and choose $N,B,D,Q$ as in Lemma~\ref{pigeonholing}. Mertens' estimate and Lemma~\ref{analytic} give
\[
\sum_{p\in\mathcal S}\frac{\log p}{p}
\ge\log Q-O\!\left(\log|q|+(1-\beta)(\log Q)^2\right).
\]
Since $\log Q=\frac43\log N+O(\log\log N)$ and $\log B=\log N+\frac12\log|q|+O(1)$, we have 
\[
\sum_{p\in\mathcal S}\frac{\log p}{p}
>\frac{13}{10}\log{B}
\]
provided $\gamma$ is sufficiently large, $\varepsilon$ sufficiently small in terms of $\gamma$, and $|q|$ sufficiently large. Lemma~\ref{pigeonholing} supplies a nonzero $F\in\mathcal L_D$ satisfying 
\begin{align}
\label{eq:FF}
F(\theta_n,\sigma\theta_n,\sigma\tau\theta_n)=0
\qquad\text{for every }n\in\{0,\ldots,N-1\}^4.
\end{align}
We apply Lemma~\ref{masser} with 
\[
\begin{aligned}
\gamma_1&=(1,1,1),\\
\gamma_2&=(\sqrt q,-\sqrt q,-\sqrt q),\\
\gamma_3&=(\sqrt2,\sqrt2,-\sqrt2),\\
\gamma_4&=(\sqrt{2q},-\sqrt{2q},\sqrt{2q}).
\end{aligned}
\]
Since
\[
(\theta_n,\sigma\theta_n,\sigma\tau\theta_n)
=n_1\gamma_1+\cdots+n_4\gamma_4,
\qquad 0\le n_j\le N-1
\]
by~\eqref{eq:FF}, the conditions of Lemma~\ref{masser} are satisfied.

Define the linear map \(\pi:\mathbb C^4\to\mathbb C^3\) by
\[
\pi(x_1,x_2,x_3,x_4)
=x_1\gamma_1+x_2\gamma_2+x_3\gamma_3+x_4\gamma_4.
\]
Since
\[
\det(\gamma_1,\gamma_2,\gamma_3)=4\sqrt{2q}\neq0,
\]
the map \(\pi\) is surjective and its kernel is spanned by the vector
\[
v=(\sqrt{2q},\sqrt2,-\sqrt q,-1).
\]
The coordinates of $v$ are linearly independent over $\Q$, thus $\ker \pi$ lies in no proper complex subspace spanned by rational vectors. Hence by Lemma~\ref{dim},
$$\mu(\Gamma)\geq4/3,$$ where $\Gamma$ is the lattice generated by  $\gamma_1,\dots,\gamma_4$.
Lemma~\ref{masser}, with $M=N-1$, now gives
$$\deg F\ge((N-1)/3)^{4/3}>D$$ contradicting that $F\in \mathcal L_D$.

\end{proof}

\bigskip
\noindent School of Science, UNSW Canberra, Canberra, Australia

\noindent \textit{Email:} \href{mailto:bryce.kerr@unsw.edu.au}{bryce.kerr@unsw.edu.au}
\end{document}